\documentclass[oneside,english]{amsart}
\usepackage[T1]{fontenc}
\usepackage[utf8]{inputenc}
\usepackage{amstext}
\usepackage{amsthm}
\usepackage{amssymb}

\makeatletter
\numberwithin{equation}{section}
\numberwithin{figure}{section}
\theoremstyle{plain}
\newtheorem{thm}{\protect\theoremname}
\theoremstyle{plain}
\newtheorem{lem}[thm]{\protect\lemmaname}
\newtheorem{rem}[thm]{\protect\remarkname}
\makeatother

\usepackage{babel}
\providecommand{\lemmaname}{Lemma}
\providecommand{\theoremname}{Theorem}
\providecommand{\remarkname}{Remark}

\begin{document}
\global\long\def\tauo{\tau_{\mathrm{ocean}}}%

\global\long\def\cauo{c_{\mathrm{ocean}}}%

\global\long\def\dev{\mathrm{Dev}\,}%

\global\long\def\sym{\mathrm{sym}\,}%

\global\long\def\D{D(u)}%

\global\long\def\div{\mathrm{div}\,}%

\global\long\def\tr{\mathrm{tr}\,}%

\global\long\def\del{\delta}%

\global\long\def\R{\mathbb{R}}%

\global\long\def\deld{\del_{\bullet}}%

\global\long\def\delu{\del^{\bullet}}%

\global\long\def\eps{\varepsilon}%

\global\long\def\Uo{U_{\mathrm{ocean}}}%

\global\long\def\cb{c_{\bullet}}%

\global\long\def\Cb{C_{\bullet}}%

\global\long\def\cbu{c^{\bullet}}%

\global\long\def\Cbu{C^{\bullet}}%

\global\long\def\mF{\mathcal{F}}%

\global\long\def\ut{\tilde{u}}%

\global\long\def\tA{\tilde{A}}%

\title[Global existence for Hibler's model]{Global existence and uniqueness for Hibler's visco-plastic sea-ice
model}
\author{Stefan Dingel and Karoline Disser}
\address{Universität Kassel\\
 Institut für Mathematik \\
 Heinrich-Plett-Stra{ß}e 40 \\
 34132 Kassel, Germany }
\email{sdingel@mathematik.uni-kassel.de}
\email{kdisser@mathematik.uni-kassel.de}
\date{August 22, 2025}
\begin{abstract}
In this paper, we prove global existence and uniqueness of weak solutions to the momentum equations of Hibler's visco-plastic model for the dynamics of the arctic sea-ice covers. Although Hibler's model is standardly used in global climate simulations, there are only few rigorous mathematical results so far that mainly concern local-in-time well-posedness of globally regularized variants. Here, we consider Hibler's original model with local cut-off for arbitrarily small and large strain rates. Degeneracy and plasticity of the stress tensor hold in this range.   \end{abstract}

\keywords{Hibler's sea-ice model, global existence, visco-plastic rheology,
monotone operators, complex fluids, sea ice}
\thanks{The authors gratefully acknowledge the financial support by DFG project FOR 5528.}
\subjclass[2000]{ 35Q86(primary),  35A01, 35A02, 86A40, 86A08, 74C05, 74H20, 74H25  (secondary)}
\maketitle

\section{Introduction}

\subsection{Hibler's sea-ice model}

The arctic sea-ice covers strongly impact global climate. Modelling and simulation of their dynamics is a very challenging topic due to their complex material and thermodynamical behaviour, strong coupling to oceanic and atmospheric conditions and the difficulty of obtaining relevant data, \cite{BCEFGHHJPPSSW20}, \cite{HABB2020}. For modeling and computational reasons, Hibler's visco-plastic sea-ice rheology \cite{Hib79} is used as a standard in global climate models. At the same time, rigorous mathematical analysis of Hibler's model is still largely missing. \\
The topic has recently gained increasing attention \cite{BDDH22,LTT22, BH23, BBH24}, but existence of global solutions remains to be an open problem, and  results so far concern local well-posedness and regularized versions of Hibler's model. We also refer to \cite{BLTT25}, where global existence of the related elasto-visco-plastic (EVP) model was proved, assuming a Kelvin-Voigt regularization. \\
Here, we prove the first result on global existence of solutions for Hibler's sea-ice momentum balance, given by  
\begin{equation}
\mathrm{m}(\dot{u}+u\cdot\nabla u)-\mathrm{div}(\sigma)=\mathrm{m}\omega u^\perp+\tau_{\mathrm{ocean}}(u)+\tau_{\mathrm{atm}}-\mathrm{m}\mathrm{g}\nabla H+f,\label{eq:eq}
\end{equation}
where, up to any time $T>0$, $u\colon[0,T]\times\Omega\to\mathbb{R}^{2}$
denotes the unknown velocity of the sea-ice cover on a bounded Lipschitz
domain $\Omega\subseteq\mathbb{R}^{2}$, and mass $\mathrm{m}>0$,
Coriolis parameter $\omega\in\mathbb{R}$, and gravity $g>0$
are given constants, and $\tauo,\tau_{\mathrm{atm}},H$ and $f$ are
forces due to external conditions (details below). Here, $v^{\perp}=(-v_{2},v_{1})^{\top}$ denotes the vector perpendicular to $v=(v_{1},v_{2})^{\top}\in\mathbb{R}^{2}$. 
On the boundary $\partial \Omega$, we assume homogenous Dirichlet
boundary conditions. 
In Hibler's model, the visco-plastic stress tensor
\[
\sigma=\sigma(Du)=\sigma(P,Du)=\frac{P}{2}\left[\frac{\lambda\left(\dev Du\right)+\left(\div u\right)\mathrm{Id}}{\delta(Du)}-\mathrm{Id}\right]
\]
is derived from the (plastic) tensor
\[
\sigma_{p}(P,Du)=\begin{cases}
    \frac{P}{2}\left[\frac{\lambda\left(\dev Du\right)+\left(\tr Du\right)\mathrm{Id}}{\sqrt{\lambda|\dev Du|^{2}+(\tr Du)^{2}}}-\mathrm{Id}\right],&\text{if } Du\neq0,\\
    0,&\text{if } Du=0,
\end{cases}
\]
by applying a (local) cut-off to the strain rate in the denominator, see (\ref{eq:defdel}) below.
Here,
\[
Du:=\sym \nabla u := \frac{1}{2}\left(\nabla u+(\nabla u)^\top\right)
\]
denotes the symmetric part of the gradient of $u$, and the constant $\lambda=\frac{2}{\bar{e}^{2}}< 1$ is derived from the
parameter $\bar{e} \approx 2 $ in Hibler's model that describes the ratio
of ellipticity of the yield curve. For the analysis, it is only required
that $\bar{e}>0$. The function $P\colon[0,T]\times\Omega\to[P_{\bullet},\infty)$
is a given parameter of local ice strength. In the following, for given
matrices $z,\tilde{z}\in\mathbb{R}^{2\times2}$ with coefficients
$z_{ij},\tilde{z}_{ij},i,j\in\{1,2\}$, we use the short-cuts
\[
\tr z=z_{11}+z_{22},\qquad\dev z=\frac{1}{2}\left(\begin{array}{cc}
z_{11}-z_{22} & z_{12}+z_{21}\\
z_{12}+z_{21} & z_{22}-z_{11}
\end{array}\right),
\]
and the Frobenius norm $\vert z\vert$ induced by the scalar product
\[
z:\tilde{z}:=\sum_{i,j}z_{ij}\tilde{z}_{ij}.
\]
The \emph{visco-plasticity}
of the stress tensor $\sigma$ is due to the inverse viscosity function
\begin{equation}
\delta\colon\R^{2\times2}\ni z\mapsto\begin{cases}
\del^{\bullet}, & \delta_{p}(z)\geq\del^{\bullet},\\
\del_{p}(z):=\sqrt{\lambda|\dev z|^{2}+(\tr z)^{2}}, & \del_{\bullet}\leq\del_{p}(z)\leq\del^{\bullet},\\
\del_{\bullet}, & \del_{\bullet}\geq\delta_{p}(z),
\end{cases}\label{eq:defdel}
\end{equation}
with cut-off at an arbitrarily large fixed value $\delta^{\bullet}>0$
and at an arbitrarily small fixed value $\del_{\bullet}>0$. Examples
of explicit values for $\deld,\delu$ are given in \cite{Hib79}.

\subsubsection*{Properties of $\sigma$ and $\sigma_{p}$}

The (degenerate) plastic stress tensor $\sigma_{p}$ has several distinct
properties that are difficult to deal with in numerical simulation
and analysis:
\begin{enumerate}
\item \label{deg}\emph{degeneracy:} if $\dev z=0\in\mathbb{R}^{2\times2}$, then a
direct calculation shows that $\sigma_{p}(z)=0$, i.e the material
becomes (locally) stress-free. This was intended by Hibler due to
the high resistance of sea ice to compression.
\item \label{plast}\emph{plasticity:} the equality
\begin{equation}
\frac{1}{4}|\tr(\sigma_{p}(z)+\frac{P}{2}\mathrm{Id})|^{2}+\frac{1}{\lambda}|\dev(\sigma_{p}(z)+\frac{P}{2}\mathrm{Id})|^{2}=\frac{P^{2}}{4},\quad\text{for all }z\in\R^{2\times2}, \label{eq:yieldcurve}
\end{equation}
shows that the principle stresses (the eigenvalues of $\sigma_{p}$)
are normed to an elliptic yield curve. This independence of stresses
of the strain rate is desirable in general, particularly for the small
rates $\delta_{p}$ that are expected here, but it can also be viewed
as an arbitrarily large viscosity occuring as $\del_{p}\to0$. This
is the main reason why Hibler suggests a cut-off in $\del$. He also suggests to use a value
$\del_{\bullet}$ well below what will appear in simulations.
\item \label{disc} \emph{discontinuity: }clearly, $\sigma_{p}$ is discontinuous at $z=0$.
\item \label{coerc} \emph{non-coercivity: }this is a mathematical issue associated to
(\ref{eq:yieldcurve}): it is not true that $|\sigma_{p}(z)|\to\infty$
as $z\to\infty$ (weak coercivity), so there are no straightforward 
a-priori estimates, e.g. on an Galerkin or other type of approximation. 
\end{enumerate}
Hibler's stress tensor $\sigma$, which uses the modified strain rate
$\del$ in (\ref{eq:defdel}), has the following properties:
\begin{enumerate}
\item degeneracy is still present because $\sigma(z)=0$ remains true as long
as $\dev z=0$ and $\del_{\bullet}\leq|\tr z|\leq\delta^{\bullet}$.
\item Equality (\ref{eq:yieldcurve}) also remains true for the typical values
$\del_{\bullet}\leq\del_{p}(z)\leq\delta^{\bullet}$. Otherwise, principle
stresses remain constrained within the ellipse ($\del_{p}(z)\leq\del_{\bullet}$)
or increase linearly in $|z|$ ($\del^{\bullet}\leq\del_{p}(z)$).
\item The cut-off for small $\del_{p}(z)$ turns $\sigma$ into a continuous
function. It is however not differentiable at $\deld,\delu$ and thus
often referred to as \emph{non-smooth}.
\item Due to the cut-off at large values of $\del_{p}$, the operator $\div\sigma(D \cdot ) \colon H_{0}^{1}\to H^{-1}$
is coercive in a suitable sense, see Lemma \ref{lem:coercive}.
\end{enumerate}
To summarize, one can say that Hibler's stress tensor with local cut-off
respects the two key material properties (\ref{deg}) and (\ref{plast}) in an arbitrarily
large range of parameters, but helps to solve mathematical issues
associated to the plastic tensor $\sigma_{p}$ by improving on (\ref{disc})
and (\ref{coerc}). This is an advantage with respect to other, \emph{global} regularizations
or modifications used in the literature. For example, the proof of
local well-posedness in \cite{BDDH22} and \cite{LTT22} relies on the $\varepsilon$-regularization
of $\delta_{p}$ to 
\begin{equation} \label{deltaeps}
	\delta_{p}^{\varepsilon}(z):=\sqrt{\eps+\lambda|\dev z|^{2}+(\tr z)^{2}}.
	\end{equation}
Although $\delta_{p}^{\eps}$ gives a better fit to $\del_{p}$ than
$\del$ at large $z$, it destroys properties
(\ref{deg}) and (\ref{plast}) and induces a change of type in turning the momentum
equations into a quasilinear parabolic system, \cite{BDDH22}.
In numerical analysis, typically both $\eps$-regularization
and cut-off are used \cite{MK2021}. Our main result here extends to this case, see Subsection \ref{ssec:eps}.
The proof of global existence for the related
EVP model in \cite{BLTT25} uses the global Kelvin-Voigt $\alpha$-regularization in
a dynamic formulation for $\sigma$. Using $\alpha>0$,
it is shown that it is possible to pass to the limit $\eps\to0$
in the additional $\eps$-regularization. However,
$\alpha>0$ changes the type of the original elasto-visco-plastic
formulation (which seems to be ill-posed if $\alpha=0$).

Properties (\ref{plast}) - (\ref{coerc}) of $\sigma_{p}$ are shared
by the total variation flow associated to the 1-Laplacian \cite{ABCM2001}.
It is not yet well-studied as a system that uses the symmetric gradient
only, or in a degenerate setting, or including additional non-linearities, see \cite{BCE2025} and references therein for recent results. 
Hibler's modification by local cut-off provides a particular form
of approximation that preserves (\ref{deg}), (\ref{plast}), locally, unlike global approximations by $\varepsilon >0$, \cite{FP2003}, $p>1$, \cite{GK2019}, or added viscosity, used for the related model of Bingham fluids \cite{DL76}. Investigations of the singular limits $\del_{\bullet}\to0,\del^{\bullet}\to\infty$
may be the subject of future work.

\subsubsection*{External forces}

In (\ref{eq:eq}),
\begin{equation}
\tauo=\tauo(u)=c_{\mathrm{ocean}}|\Uo-u|(\Uo-u)_{\theta}\label{eq:tauo}
\end{equation}
refers to a typical forcing of the ice sheet due to the given geostrophic
ocean current $\Uo$, where $c_{\mathrm{ocean}}\geq0$ is a given
constant and $\theta$ denotes the water turning angle. Here, for
all $v\in\R^{2}$, $v_{\theta}=\cos\theta v+\sin\theta v^{\perp}$
denotes the vector turned by angle $\theta$.
For the analysis, we assume 
$0\leq\theta\leq\frac{\pi}{4}$ and note that this is a standard assumption
in the literature, \cite{HTCF2019}. For simplicity of notation, we
write 
$$ h:=\tau_{\mathrm{atm}}-\mathrm{m}\mathrm{g}\nabla H+f, $$ 
where
$\tau_{\mathrm{atm}}$ denotes forcing due to air flow above the ice
sheet but is typically assumed to be indenpendent of $u$, as is the
force $-\mathrm{m}\mathrm{g}\nabla H$ due to varying height $H$
of the sea surface, cf.\ \cite{Hib79}. The term $f$ can be used to collect additional external forces. For typical values of physical parameters in this model, see \cite{Hib79} or e.g.\ Table 1 in \cite{MDLLHRBHK2021}. 

\subsubsection*{Convection term}

The term $\mathrm{m}(u\cdot\nabla u)$ is typically omitted in simulations and analysis
due to its smallness \cite{BLTT25,MDLLHRBHK2021}. Viewed as a Lipschitzian semilinearity, this term can be included in our analysis by a fixed-point-argument, either locally in time or for small initial data. However, its global estimates would depend on a coherent modelling of the transport of mass (as in compressible
flows), or on incompressibility. In future work, we will investigate the coupled thermodynamic aspects of sea ice dynamics associated to Hibler's model, with varying coupled ice strength $P$ and mass $\mathrm{m}$, and include convection in this context. 

\subsection{Main result}

Let $V:=H_{0}^{1}(\Omega;\R^{2})$ denote the usual first-order Hilbert
Sobolev space with zero boundary trace. Then 
$$ V\overset{c}{\hookrightarrow}H := L^{2}(\Omega;\R^{2})\simeq H^{*}\overset{c}{\hookrightarrow}V^{*}, $$
with $V^{*}=H^{-1}(\Omega;\R^{2})$ the dual space of $V$, forms a
Gelfand triple with compact dense embeddings. The main result is the
following.
\begin{thm}
\label{thm:mr}Let $T>0$ and $u_{0}\in H$ be given. Assume that
\begin{align*}
\Uo & \in L^{\infty}(0,T;H)\cap L^{2}(0,T;L^{4}), \\
 h & \in L^{2}(0,T;V^{*}), \\
P & \in L^{\infty}(0,T;L^{\infty}(\Omega;[P_{\bullet},\infty)), \quad \text{for some } P_\bullet >0. 
\end{align*} 
Then there exists a unique weak solution
\[
u\in  C([0,T];H)\cap L^{2}(0,T;V), \quad \dot{u} \in L^{2}(0,T;V^{*})
\]
of (\ref{eq:eq}) with initial value $u(0)=u_{0}$.
\end{thm}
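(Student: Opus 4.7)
The plan is to recast \eqref{eq:eq}, with the convection term omitted as discussed after its statement, as an abstract parabolic evolution
\begin{equation*}
\mathrm{m}\dot u + \mathcal{A}(u) = \tilde h \quad\text{in }L^2(0,T;V^*),\qquad u(0)=u_0,
\end{equation*}
on the Gelfand triple $V\hookrightarrow H\hookrightarrow V^*$, where the spatial operator $\mathcal{A}\colon V\to V^*$ is
\begin{equation*}
\langle \mathcal{A}(u),v\rangle := \int_\Omega \sigma(P,Du):Dv\,dx - \mathrm{m}\omega\int_\Omega u^\perp\cdot v\,dx - \int_\Omega \tauo(u)\cdot v\,dx,
\end{equation*}
and the constant contribution $-\tfrac{P}{2}\mathrm{Id}$ inside $\sigma$ is absorbed into $\tilde h = h+\tfrac12\nabla P\in L^2(0,T;V^*)$ using $P\in L^\infty(L^\infty)$. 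I would then apply the standard existence theory for parabolic equations governed by a bounded, coercive, pseudomonotone operator (Lions--Brezis--Showalter).

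The structural key is that $z\mapsto\sigma(P,z)+\tfrac{P}{2}\mathrm{Id}$ is the gradient, in the weighted inner product implicit in $\del_p$, of a convex potential $\Psi(P,\cdot)$: quadratic on $\{\del_p(z)\le\deld\}$ and on $\{\del_p(z)\ge\delu\}$ (Newtonian viscosities $\sim P/\deld$ and $\sim P/\delu$), and proportional to $P\del_p(z)$ on the intermediate plastic band where it is convex and $1$-homogeneous. Consequently $u\mapsto-\div\sigma(P,Du)$ is monotone, hemicontinuous, of linear growth, and coercive on $V$ by Lemma \ref{lem:coercive}. The Coriolis contribution is $H$-antisymmetric and linear, hence a skew compact perturbation $V\to V^*$. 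The ocean drag $u\mapsto\tauo(u)$ maps $V$ continuously into $L^2\hookrightarrow V^*$ and is compact by Rellich together with the two-dimensional Sobolev embedding $V\hookrightarrow L^q$, $q<\infty$; its dissipation $-\int\tauo(u)\cdot u\,dx$ yields the favourable term $+c_{\mathrm{ocean}}\cos\theta\int|\Uo-u|^3\,dx\ge0$ since $0\le\theta\le\tfrac{\pi}{4}$, while the cross terms involving $\Uo$ are absorbed by Young and Gagliardo--Nirenberg using $\Uo\in L^\infty(0,T;H)\cap L^2(0,T;L^4)$. Altogether $\mathcal{A}$ is bounded, pseudomonotone, and coercive modulo lower-order data terms.

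Existence then follows by Galerkin approximation in a countable basis of $V$ orthonormal in $H$: the a priori bounds give $u_n\in L^\infty(0,T;H)\cap L^2(0,T;V)$ and $\dot u_n\in L^2(0,T;V^*)$ uniformly, Aubin--Lions yields strong convergence in $L^2(0,T;H)$, the compact lower-order terms pass to the limit by continuity, and the monotone principal part is identified by Minty's trick. The regularity $u\in C([0,T];H)$ is automatic from $H^1(0,T;V^*)\cap L^2(0,T;V)\hookrightarrow C([0,T];H)$. For uniqueness, I would test the difference $w=u_1-u_2$ against itself: monotonicity of $\sigma$ delivers the non-negative term $\int(\sigma(Du_1)-\sigma(Du_2)):Dw\,dx$, which by the coercivity of Lemma \ref{lem:coercive} dominates $\alpha\|w\|_V^2$ modulo $\|w\|_H^2$; the Coriolis term drops out; the ocean-drag difference is controlled via the pointwise bound $\bigl||a|a_\theta-|b|b_\theta\bigr|\le (|a|+|b|)|a-b|$ combined with $V\hookrightarrow L^4$ and Gagliardo--Nirenberg, producing $\eps\|w\|_V^2+C(t)\|w\|_H^2$ with $C\in L^1(0,T)$; Gronwall then closes the estimate. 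The main obstacle is that $\sigma$ is only weakly (not strictly) monotone on the deviatoric-free cone because of the degeneracy (\ref{deg}), so the $V$-coercivity required to absorb $\eps\|w\|_V^2$ in the uniqueness step, and to close the energy estimate against the cubic ocean-drag growth, must be extracted from the viscous cut-off regimes $\delta\equiv\deld$ and $\delta\equiv\delu$; leveraging this structure through Lemma \ref{lem:coercive} is the analytical heart of the argument.
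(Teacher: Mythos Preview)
Your existence strategy matches the paper's: monotonicity and hemicontinuity of Hibler's operator (you via a convex potential, the paper by direct calculation), coercivity from Lemma~\ref{lem:coercive}, the Coriolis term as skew, and the ocean drag handled within Liu's pseudomonotone framework via Galerkin approximation and Minty's trick.

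Your uniqueness argument, however, has a genuine gap. You assert that the non-negative stress-difference term $\int(\sigma(Du_1)-\sigma(Du_2)):Dw$ ``by the coercivity of Lemma~\ref{lem:coercive} dominates $\alpha\|w\|_V^2$ modulo $\|w\|_H^2$''. This conflates coercivity of $A(t)$ (a lower bound for $\langle A(t)u,u\rangle$) with strong monotonicity (a lower bound for $\langle A(t)u-A(t)v,u-v\rangle$). Hibler's operator is \emph{not} strongly monotone, not even up to a lower-order term, and the failure is not confined to the deviatoric-free cone as you suggest: for any $u$ with $\del_p(Du)\in(\deld,\delu)$ a.e.\ and $v=(1+\eps)u$ one has $A(t)u=A(t)v$ identically while $\|u-v\|_V\neq 0$ (see the remark following Lemma~\ref{lem:monotone}). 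So there is nothing on the left to absorb the $\eps\|w\|_V^2$ produced by your estimate of the ocean-drag difference, and the Gronwall loop does not close.

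The paper circumvents this by proving in Lemma~\ref{lem:Gmonotone} that the ocean-drag operator $G(t)$ is itself monotone for $0\le\theta\le\frac{\pi}{4}$: a pointwise calculation reduces the sign condition to a cubic polynomial in $|\Uo-u|/|\Uo-v|$ whose discriminant is shown to be negative on the relevant parameter range. Then $A(t)+C+G(t)$ is monotone as a sum, and uniqueness (indeed the stability estimate~\eqref{wellposed}) follows directly, with no need for $V$-coercivity of the difference and no Gronwall. This monotonicity of the ocean drag, not any hidden strong monotonicity of $\sigma$, is the analytical heart of the uniqueness proof.
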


\subsection{Literature and discussion}

For a general overview of results and challenges in mathematical sea-ice modeling, we refer to the survey \cite{BCEFGHHJPPSSW20} and references therein. Hibler's (VP) model \cite{Hib79} was originally devised based on the results of the AIDJEX program, \cite{CMPRT1974}. For the study of numerical schemes, based on this model, and of its stability, performance and comparison to related models we refer e.g.\ to \cite{ZH97,  Gray99, KHLF00, LMCHH10,  GLS13, SK18, MDLLHRBHK2021} and references therein.  \\
The two papers \cite{LTT22} and \cite{BDDH22} independently established local well-posedness for Hibler's model with slightly varying $\varepsilon$-regularizations, see also \cite{Brandt24}. In \cite{BDDH22}, global existence is proved for small initial data. These results were partially extended to the cases of interaction with a rigid body \cite{BBH24}, with ocean and atmosphere \cite{BBH24Arxiv}, and to the time-periodic setting \cite{BH23}. Numerically relevant a-priori estimates for the $\varepsilon$-regularized model with cut-off were also derived in \cite{MK2021}. A linear setting with different regularization was studied in \cite{CK23}. Related results on the ill- and well-posedness as well as change of type in Hibler's thermodynamic model without regularization were obtained in \cite{Gray99, GLS13}. Note that in this paper, we obtain well-posedness in the finite kinetic energy setting, due to estimate (\ref{wellposed}).   \\
Global existence of solutions for large data was shown recently for the (\emph{elasto}-visco-plastic) EVP-model with Kelvin-Voigt regularization \cite{BLTT25}, which was derived from Hibler's model to achieve numerical stabilization \cite{Hunke1997,DH02}.  Implementations of Hibler's model and the EVP model are both established in climate models. The simulated behaviour may differ substantially, and their respective efficiency is up to debate \cite{KDL2015, LKTHL12, LMCHH10}.  Note that the EVP-model analysed in \cite{BLTT25} includes a dynamical formulation for the stress tensor that needs additional initial data on these quantitities that are hard to come by, whereas Hibler's model was chosen to avoid this difficulty \cite{Hib79}. In comparison to the analysis in a strong setting in \cite{BLTT25}, here, we are working in a weak functional analytic framework correpsonding to finite kinetic energy at initial time and our analysis on bounded domains allows us to consider general (also non-periodic) external forces.

\section{Proof of Theorem \ref{thm:mr}}

\subsection{Hibler's operator}

We are looking for a solution $u$ to the weak momentum balance
\[
\langle\mathrm{m}\dot{u}(t),v\rangle+a(t,u(t),v)+g(t,u(t),v)+c(u(t),v)=\langle h(t),v\rangle,
\]
for almost all $t\in(0,T)$ and all $v\in V$, where $\langle\cdot,\cdot\rangle$
denotes the dual pairing for $V^{*}\times V$, 
\begin{align*}
a(t,u,v) & :=\int_{\Omega}\sigma(P(t),Du):Dv\\
 & =\int_{\Omega}\frac{P(t)}{2}\left[\frac{\lambda\dev Du:\dev Dv+\div u\,\div v}{\delta(Du)}-\div v\right]
\end{align*}
denotes the form $a\colon[0,T]\times V\times V\to\R$ associated to
Hiber's stress tensor, and
\begin{equation}
g(t,u,v):=-\int_{\Omega}\tauo(u)\cdot v,\qquad c(u,v):=-\int_{\Omega}\mathrm{m}\omega u^\perp\cdot v,\label{eq:defgc}
\end{equation}
are the forms corresponding to the external forces that depend on
$u$.
We consider Hibler's operator given by
\[
A(t)\colon V\to V^{*},\qquad\langle A(t)u,v\rangle:=a(t,u,v),
\]
in the framework of monotone operators. As a reference, we also define
\begin{align*}
a_{p}(t,u,v) & :=\int_{\Omega}\sigma_{p}(P(t),Du):Dv, 
\end{align*}
as the form corresponding to the plastic stress tensor $\sigma_{p}$
without cut-off  and the corresponding operator
\[
A_{p}(t) \colon V\to V^{*},\qquad\langle A_{p}(t)u,v\rangle:=a_{p}(t,u,v).
\]

\begin{lem}
\label{lem:monotone}The operators $A(t)$ are monotone
and hemicontinuous, i.e. for all $t\in(0,T)$,
\begin{equation}
\langle A(t)u-A(t)v,u-v\rangle\geq0,\qquad\text{for all }u,v\in V,\label{eq:monotonicity}
\end{equation}
and for all $u,v,w\in V$, the map
\begin{equation}
[0,1]\ni h\mapsto a(t,u+hv,w)\label{eq:hemicont}
\end{equation}
is continuous. Furthermore, the operators $A_{p}(t)$ are monotone.
\end{lem}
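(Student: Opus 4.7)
The plan is to reduce both monotonicity statements to pointwise monotonicity of the symbol maps $z\mapsto\sigma(P,z)$ and $z\mapsto\sigma_p(P,z)$ on $\R^{2\times 2}$, uniformly for $P\in[P_\bullet,\infty)$: once we know $(\sigma(P,z_1)-\sigma(P,z_2)):(z_1-z_2)\geq 0$ pointwise, inserting $z_1=Du(x)$, $z_2=Dv(x)$ and integrating over $\Omega$ yields (\ref{eq:monotonicity}), and analogously for $A_p$. The key observation that drives the argument is
\[
\sigma(P,z) + \tfrac{P}{2}\mathrm{Id} \;=\; \tfrac{P}{2}\,\Phi(z)/\del(z),\qquad \Phi(z) := \lambda\,\dev z + (\tr z)\mathrm{Id}.
\]
Adding the constant $\tfrac{P}{2}\mathrm{Id}$ does not affect monotonicity in $z$, so it is enough to display $\sigma+\tfrac{P}{2}\mathrm{Id}$ as a (sub)gradient of a convex potential $\Psi\colon\R^{2\times 2}\to\R$.

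For the plastic stress the natural potential is $\Psi_p(z):=\tfrac{P}{2}\del_p(z)$. Since $\del_p$ is induced by the positive semidefinite quadratic form $\lambda|\dev z|^2+(\tr z)^2$, it is a seminorm and hence convex. Direct differentiation using $\nabla_z \del_p(z)=\Phi(z)/\del_p(z)$ on $\{\del_p>0\}$ gives $\nabla_z\Psi_p(z)=\sigma_p(P,z)+\tfrac{P}{2}\mathrm{Id}$ there. At points where $\del_p(z)=0$ (equivalently $\sym z=0$), where $\sigma_p$ is defined to be $0$, the inclusion $\tfrac{P}{2}\mathrm{Id}\in\partial\Psi_p(z)$ follows at once from the elementary estimate $|\tr w|\leq\del_p(w)$ for all $w\in\R^{2\times 2}$. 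Thus $\sigma_p+\tfrac{P}{2}\mathrm{Id}$ is a selection of the maximal monotone subdifferential $\partial\Psi_p$, and pointwise monotonicity is immediate.

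For Hibler's stress, since $\del(z)$ depends on $z$ only through $\del_p(z)$, I make the ansatz $\Psi(z):=\tfrac{P}{2}F(\del_p(z))$ for some $F\colon[0,\infty)\to\R$ to be determined. The chain rule forces $F'(s)=s/\bar\del(s)$, where $\bar\del(s):=\max(\deld,\min(s,\delu))$. Integrating from $F(0)=0$ yields an explicit $F$ that is quadratic on $[0,\deld]$, affine on $[\deld,\delu]$ and quadratic on $[\delu,\infty)$, with the pieces matching in a $C^1$ fashion. Then $F'$ is continuous and non-decreasing on $[0,\infty)$, so $F$ itself is convex and non-decreasing; hence $\Psi=\tfrac{P}{2}\,F\circ\del_p$ is convex as a composition of a convex non-decreasing function with a seminorm. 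The identity $\nabla_z\Psi(z)=\sigma(P,z)+\tfrac{P}{2}\mathrm{Id}$ then holds everywhere (the apparent singularity at $z=0$ is removed because $\Psi$ reduces there to the smooth quadratic $\tfrac{P}{4\deld}\del_p(z)^2$), and (\ref{eq:monotonicity}) follows. I expect the main technical point to be the careful matching of the three pieces of $F$ across the cut-off values $\deld,\delu$ to guarantee $C^1$-regularity and convexity simultaneously; the rest is then abstract convex analysis.

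For hemicontinuity, the cut-off $\del(z)\geq\deld>0$ makes $z\mapsto\sigma(P,z)$ continuous on all of $\R^{2\times 2}$ with at most linear growth, $|\sigma(P,z)|\leq C(1+|z|)$ with $C=C(\|P\|_\infty,\deld,\lambda)$. For fixed $u,v,w\in V$ and $h_0\in[0,1]$, the integrand $\sigma(P(t),D(u+hv)):Dw$ converges pointwise a.e.\ as $h\to h_0$ and is dominated by $C(1+|Du|+|Dv|)|Dw|\in L^1(\Omega)$ (by Cauchy--Schwarz). Lebesgue's dominated convergence theorem then yields continuity of the map in (\ref{eq:hemicont}).
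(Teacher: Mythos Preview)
Your proposal is correct, and the hemicontinuity argument is essentially the same as the paper's (pointwise continuity of $\sigma$ via $\del\geq\deld>0$, together with a linear growth bound, followed by dominated convergence). For monotonicity, however, you take a genuinely different route. The paper argues by a direct pointwise computation: with $D^\lambda u:=\lambda\,\dev Du+(\div u)\,\mathrm{Id}$ and the observations $D^\lambda u:Du=\del_p^2(Du)$, $D^\lambda u:Dv\leq\del_p(Du)\del_p(Dv)$, it obtains
\[
\langle A(t)u-A(t)v,u-v\rangle\;\geq\;\int_\Omega \tfrac{P(t)}{2}\,\bigl(\del_p(Du)-\del_p(Dv)\bigr)\bigl(f(\del_p(Du))-f(\del_p(Dv))\bigr),
\]
where $f(s):=s/\del(s)$ is non-decreasing, and monotonicity of $A_p(t)$ is the special case $f\equiv 1$. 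Your argument instead exhibits $\sigma(P,\cdot)+\tfrac{P}{2}\mathrm{Id}$ as the gradient of the convex potential $\Psi(z)=\tfrac{P}{2}F(\del_p(z))$ with $F'(s)=s/\bar\del(s)$, and similarly $\sigma_p+\tfrac{P}{2}\mathrm{Id}\in\partial\Psi_p$ with $\Psi_p=\tfrac{P}{2}\del_p$. The paper's calculation is shorter and makes the mechanism (monotonicity of the scalar profile $f$) completely explicit; your potential-based approach is more structural, immediately yields that Hibler's operator is a gradient (variational) operator, and adapts transparently to other cut-off profiles: any choice of $\del$ of the form $\bar\del\circ\del_p$ with $s\mapsto s/\bar\del(s)$ non-decreasing produces a convex $F$ and hence a monotone stress. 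In fact the two arguments are close cousins: your convexity of $F$ is precisely the paper's monotonicity of $f=F'$.
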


\begin{proof}
To shorten the notation, for all $u\in V$, set
\[
D^{\lambda}u:=\lambda \dev Du + (\div u) \mathrm{Id}.
\]
Let $u,v\in V$ be given. By definition and using Cauchy-Schwarz,
\begin{equation}
    D^{\lambda}u:Du=\del_{p}^{2}(Du),\qquad\text{and }D^{\lambda}u:Dv\leq\delta_{p}(Du)\delta_{p}(Dv).\label{eq:dudv}
\end{equation}
By assumption, $P(t)\geq 0$. Using (\ref{eq:dudv}), the calculations are straightforward,
\begin{align*}
&\phantom{=}\langle A(t)u-A(t)v,u-v\rangle\\&= \int_{\Omega}\frac{P(t)}{2}\left(\frac{D^{\lambda}u:Du}{\delta(Du)}-\frac{D^{\lambda}u:Dv}{\delta(Du)}-\frac{D^{\lambda}v:Du}{\delta(Dv)}+\frac{D^{\lambda}v:Dv}{\delta(Dv)}\right)\\ 
&\geq \int_{\Omega}\frac{P(t)}{2}\left(\del_{p}(Du)-\del_{p}(Dv)\right)\left(f(\del_{p}(Du)) -f(\del_{p}(Dv)) \right),
\end{align*}
where $f(\del_{p}(Du)) := \frac{\del_{p}(Du)}{\del(Du)}$ is an increasing function in terms of $\del_{p}(Du)$, so that the monotonicity of $A(t)$ directly follows from the monotonicity of $f$. Monotonicity of $A_{p}(t)$ follows in the same way, where $f$ would be replaced by $f_p \equiv 1$. 

For the proof of (\ref{eq:hemicont}), note that as $u,v,w$ are chosen
arbitrarily in the vector space $V$, it is sufficient to check continuity
in one point, for example, in $h=0$. Given $h_{n}\overset{n\to\infty}{\longrightarrow}0$,
consider
\begin{align}
 & |a(t,u+h_{n}v,w)-a(t,u,w)|\label{eq:acont}\\
 & \leq\int_{\Omega}\frac{P(t)}{2}\left[|D^{\lambda}u:Dw|\left|\frac{1}{\del(Du+h_{n}Dv)}-\frac{1}{\del(Du)}\right|+h_{n}\frac{\left|D^{\lambda}v:Dw\right|}{\del(Du+h_{n}Dv)}\right]\nonumber
\end{align}
The continuity of $z\mapsto\del(z)$ and the strict positivity, $\delta(z)\geq\deld>0$,
imply that the integrands converge pointwise a.e. We can use
\begin{equation}
\lambda |\sym z|^2=\lambda |\dev z|^{2}+\frac{\lambda}{2}(\tr z)^{2}\leq \lambda |\dev z|^{2}+\frac{1}{2}(\tr z)^{2} =  \delta_{p}^2(z)\leq|\sym z|^2,\label{eq:betragvsdel}
\end{equation}
(here, $\lambda\leq1$, otherwise, only the constants would be different),
and both Korn's and Poincaré's inequalities to obtain equivalence
of norms,
\begin{equation}
\Vert u\Vert_{V}^{2}\lesssim\Vert Du\Vert_{2}^{2}\lesssim\int_{\Omega}\delta_{p}^{2}(Du)\lesssim\Vert Du\Vert_{2}^{2}\lesssim\Vert u\Vert_{V}^{2}.\label{eq:kornpoinc}
\end{equation}
Thus, convergence to $0$ of the integral in (\ref{eq:acont}) follows
from dominated convergence using (\ref{eq:dudv}) and again the strict
positivity of $\del$. 
\end{proof}

\begin{rem}
    Clearly, $A(t)$ is not strictly monotone. As an example, for every  $u \in V$ such that $\del(Du) \in (\del_{\bullet},\del^{\bullet})$ almost everywhere,
    there is an $\varepsilon > 0$ such that for $v := (1+\varepsilon) u$, we also have $\del(Dv) \in (\del_{\bullet},\del^{\bullet})$. In this case,  $\langle A(t)u-A(t)v,u-v\rangle=0$.
\end{rem}

\begin{lem}
\label{lem:coercive}Uniformly in $t\in(0,T]$, the operators $A(t)$
are coercive with
\begin{equation}
\langle A(t)u,u\rangle\geq\cb\Vert u\Vert_{V}^{2}-\Cb,\qquad\text{for all }u\in V, \label{eq:Acoerc-1}
\end{equation}
and satisfy the growth bound
\begin{equation}
\Vert A(t)u\Vert_{V^{*}}\leq C^\bullet + \cbu \Vert u\Vert_{V},\qquad\text{for all }u\in V.\label{eq:Abdd-1}
\end{equation}
Moreover, the function
\[
(0,T)\ni t\mapsto\langle A(t)u,v\rangle
\]
is measurable for all $u,v\in V$.
\end{lem}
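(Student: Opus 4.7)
The lemma consists of three essentially independent claims, which I would establish separately.

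For the coercivity estimate~(\ref{eq:Acoerc-1}), the plan is to test with $v = u$ and use the identity $D^\lambda u : Du = \delta_p(Du)^2$ from~(\ref{eq:dudv}) to rewrite
\[
\langle A(t) u, u \rangle = \int_\Omega \frac{P(t)}{2}\left[\frac{\delta_p(Du)^2}{\delta(Du)} - \div u\right].
\]
The decisive ingredient is the upper cut-off at $\delta^\bullet$: on the set $E := \{x : \delta_p(Du(x)) \geq \delta^\bullet\}$, definition~(\ref{eq:defdel}) gives $\delta(Du) = \delta^\bullet$, so by~(\ref{eq:betragvsdel}), $\delta_p(Du)^2/\delta(Du) \geq (\lambda/\delta^\bullet)|Du|^2$ pointwise on $E$. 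On the complement $E^c$, one has $|Du|^2 \leq (\delta^\bullet)^2/\lambda$ pointwise (again by~(\ref{eq:betragvsdel})), so that part of the integral contributes only a constant depending on $\delta^\bullet$, $\lambda$ and $|\Omega|$. Adding and subtracting the $E^c$ contribution and applying Korn--Poincaré~(\ref{eq:kornpoinc}) yields $\int_\Omega \delta_p(Du)^2/\delta(Du) \geq c\|u\|_V^2 - C$. The remaining $-\int P \div u/2$ term is linear in $\|u\|_V$ and absorbed by Young's inequality, while the uniform lower bound $P(t) \geq P_\bullet > 0$ preserves a strictly positive quadratic prefactor.

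For the growth bound~(\ref{eq:Abdd-1}), I would use the Cauchy--Schwarz estimate $|D^\lambda u : Dv| \leq \delta_p(Du)\delta_p(Dv)$ from~(\ref{eq:dudv}) together with the elementary pointwise inequality $\delta_p(Du)/\delta(Du) \leq 1 + \delta_p(Du)/\delta^\bullet$, which follows from a direct case-check using~(\ref{eq:defdel}), to bound
\[
|\langle A(t) u, v \rangle| \leq \frac{\|P\|_\infty}{2}\int_\Omega \left[\left(1 + \frac{\delta_p(Du)}{\delta^\bullet}\right)\delta_p(Dv) + |\div v|\right].
\]
Cauchy--Schwarz in $L^2(\Omega)$ combined with~(\ref{eq:betragvsdel}) and~(\ref{eq:kornpoinc}) then extracts the claimed affine bound in $\|u\|_V$, uniformly in $t$.

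For measurability, I would simply observe that for fixed $u, v \in V$ the spatial integrand $F(x) := \tfrac{1}{2}[D^\lambda u(x) : Dv(x)/\delta(Du(x)) - \div v(x)]$ is a $t$-independent $L^1(\Omega)$-function (its integrability follows from the same estimates as in the growth bound), while $P \in L^\infty((0,T) \times \Omega)$ is jointly measurable; measurability of $t \mapsto \int_\Omega P(t,\cdot) F$ then follows from Fubini's theorem. The only non-routine step in the lemma is the coercivity: without the upper cut-off at $\delta^\bullet$, the quantity $\delta_p^2/\delta$ reduces to $\delta_p$, which only controls the $L^1$-norm of $Du$ rather than its $L^2$-norm, recovering exactly the non-coercivity~(\ref{coerc}) of $\sigma_p$ that Hibler's cut-off is engineered to overcome.
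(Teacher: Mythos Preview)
Your proof is correct and follows the same overall strategy as the paper's, but both parts can be streamlined. For coercivity, the set decomposition $E/E^c$ is unnecessary: since $\delta(z)\leq\delu$ holds \emph{everywhere} by definition~(\ref{eq:defdel}), one has the global pointwise bound $\delta_p(Du)^2/\delta(Du)\geq\delta_p(Du)^2/\delu$, and the paper obtains (\ref{eq:Acoerc-1}) in one line via (\ref{eq:kornpoinc}) and Young's inequality on the $\div u$ term. For the growth bound, the paper instead uses the lower cut-off $\delta(Du)\geq\deld$ to get $|D^\lambda u:Dv|/\delta(Du)\leq\delta_p(Du)\delta_p(Dv)/\deld$ directly; your inequality $\delta_p/\delta\leq 1+\delta_p/\delu$ is also correct and has the minor advantage of not requiring $\deld>0$, but the paper's route is shorter here. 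The measurability argument is the same.
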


\begin{proof}
For all $u,v\in V$, using (\ref{eq:dudv}), (\ref{eq:betragvsdel}) and (\ref{eq:kornpoinc}),
we obtain
\begin{equation*}
    a(t,u,u)  \geq \frac{1}{2\delu}\int_\Omega P(t)\left(\delta_{p}^{2}(Du)-\delu|\div u|\right) \geq \frac{c(\Omega)P_\bullet\lambda}{2\delu} \Vert u \Vert_V^2 - \frac{{\delu}^2\Vert P(t) \Vert_\infty}{2(1-\lambda)},
\end{equation*}
 and
\begin{equation*}
|a(t,u,v)|\leq C(\Omega,\lambda)\Vert P(t)\Vert_{\infty}\left(1 +\frac1{\deld} \Vert u\Vert_{V} \right)\Vert v\Vert_{V}.
\end{equation*}
The (weak) measurability of $A(\cdot)$ is implied by $P\in L^{1}(0,T;L^{\infty}(\Omega))$. 
\end{proof}

\subsection{Coriolis term}

Due to
\[
c(u,u-v)-c(v,u-v)=0,\quad c(u,u)=0,
\]
and
\[
c(u,v)\leq \mathrm m \vert \mathrm{\omega} \vert C(\Omega)\Vert u\Vert_{V}\Vert v\Vert_{V},
\]
the operators
\[
A(t)+C\colon V\to V^{*},\,\,\langle\left(A(t)+C\right)u,v\rangle=a(t,u,v)+c(u,v),
\]
are also monotone, hemicontinuous, coercive, and satisfy the growth condition \eqref{eq:Abdd-1}. 

\begin{rem}
    The properties above also hold for non-constant $\mathrm \omega \in L^\infty((0,T);L^r(\Omega))$, with $r>1$.  
\end{rem}

\subsection{Oceanic forces}

It remains to study the time-dependent family of operators
\[
t\mapsto G(t)\colon V\to V^{*},\qquad\langle G(t)u,v\rangle:=-\cauo\int_{\Omega}|\Uo(t)-u|(\Uo(t)-u)_{\theta}\cdot v,
\]
associated to the oceanic currents that force the system.

\begin{lem}
    \label{lem:Gbounds}The family of operators $G(t)\colon V\to V^{*}$
    is well-defined and measurable w.r.t. time. Furthermore,  for every $\varepsilon > 0$ there are functions $U_{\bullet}\in L^{1}(0,T;[0,\infty))$, $U^{\bullet}\in L^{2}(0,T;[0,\infty))$ and a constant $c^G > 0$, such that the operators $G(t)$ are hemicontinous and satisfy
    \begin{equation}
        \langle G(t)u,u\rangle\geq-\varepsilon\Vert u\Vert_{V}^{2}-U_{\bullet}(t),\qquad\text{for all }u\in V, \label{eq:Gcoerc}
    \end{equation}
    as well as the growth bound
    \begin{equation}
        \Vert G(t)u\Vert_{V^{*}}\leq U^{\bullet}(t)+c^G\Vert u\Vert_{V}\Vert u\Vert_{H},\qquad\text{for all }u\in V.\label{eq:Gbdd}
    \end{equation}
\end{lem}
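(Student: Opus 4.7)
The oceanic operator $\langle G(t)u,v\rangle = -\cauo \int_\Omega |\Uo-u|(\Uo-u)_\theta\cdot v$ is essentially quadratic in $u$, so the plan is to rely on the two-dimensional Sobolev embedding $V\hookrightarrow L^q$ for every $q<\infty$ and on the Gagliardo--Nirenberg inequality $\Vert u\Vert_{4}^2 \leq C\Vert u\Vert_V\Vert u\Vert_H$, converting the integrability of $\Uo$ in $L^\infty(0,T;H)\cap L^2(0,T;L^4)$ into the $L^1$- and $L^2$-integrability claimed for $U_\bullet$ and $U^\bullet$. The elementary identity $\vert w_\theta\vert = \vert w\vert$ reduces the rotation angle $\theta$ to a bookkeeping issue throughout.

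For the growth bound \eqref{eq:Gbdd}, I would test with $v\in V$ and expand $\vert \Uo-u\vert^2 \leq 2(\vert \Uo\vert^2+\vert u\vert^2)$. The Hölder triple $(2,4,4)$ on $\int \vert \Uo\vert^2\vert v\vert$ yields $\Vert\Uo\Vert_{2}\Vert\Uo\Vert_{4}\Vert v\Vert_{4}$, and absorbing $\Vert v\Vert_{4}\lesssim \Vert v\Vert_V$ leaves a time factor $\Vert\Uo\Vert_{2}\Vert\Uo\Vert_{4}\in L^\infty(0,T)\cdot L^2(0,T)\subset L^2(0,T)$, which gives $U^\bullet$. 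The Hölder triple $(4,4,2)$ on $\int\vert u\vert^2\vert v\vert$ followed by Gagliardo--Nirenberg produces the bilinear factor $\Vert u\Vert_V\Vert u\Vert_H$ multiplying $\Vert v\Vert_V$, matching the second term in \eqref{eq:Gbdd}.

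For the coercivity \eqref{eq:Gcoerc}, the key algebraic step is that, writing $w:=\Uo-u$, one has $(w)_\theta\cdot w = \cos\theta\,\vert w\vert^2$ since $w^\perp\perp w$, so
\[
\langle G(t)u,u\rangle \;=\; \cauo\cos\theta\int_\Omega \vert w\vert^3 \;-\; \cauo\int_\Omega \vert w\vert\,(w)_\theta\cdot \Uo.
\]
The second integral is controlled by $\int\vert w\vert^2\vert\Uo\vert$, which Young's inequality with conjugate exponents $3/2$ and $3$ splits as $\eta\Vert w\Vert_{3}^3+C_\eta\Vert\Uo\Vert_{3}^3$. Choosing $\eta$ small enough that the first part is absorbed into the positive cubic term (note $\cos\theta>0$ since $\theta\in[0,\pi/4]$) leaves a residual $C\Vert\Uo\Vert_{3}^3$. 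The interpolation $\Vert\Uo\Vert_{3}^3\leq \Vert\Uo\Vert_{2}\Vert\Uo\Vert_{4}^2$ and the hypotheses on $\Uo$ place this in $L^1(0,T)$, so the resulting bound $\langle G(t)u,u\rangle\geq -U_\bullet(t)$ is actually stronger than \eqref{eq:Gcoerc}, and the stated form holds for every $\eps>0$ with the same $U_\bullet$.

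Hemicontinuity of $G(t)$ reduces to dominated convergence: for any $h_n\to h_0$ in $[0,1]$, the integrand of $\langle G(t)(u+h_n v),w\rangle$ converges pointwise, and the dominant $(\vert\Uo\vert+\vert u\vert+\vert v\vert)^2\vert w\vert$ lies in $L^1(\Omega)$ by the same Hölder argument as in the growth bound. Measurability of $t\mapsto \langle G(t)u,v\rangle$ follows from strong measurability of $\Uo$ in the assumed Bochner spaces together with the continuous dependence of the integral on $\Uo(t)$ in $L^2\cap L^4$. I expect the main conceptual obstacle — the step most worth double-checking — to be the substitution $u=\Uo-w$ in the coercivity computation: this is what trades the apparently destabilizing $u$-factor inside the cubic nonlinearity for a non-negative cubic term in $w$ plus a residual involving only $\Uo$, and it is the mechanism that allows arbitrary $\eps>0$ in \eqref{eq:Gcoerc}.
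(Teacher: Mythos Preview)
Your argument is correct. For the growth bound, hemicontinuity, and measurability it matches the paper's proof, which also uses the H\"older splitting $\int|\Uo-u|^2|v|\leq 2(\Vert\Uo\Vert_{H}\Vert\Uo\Vert_{4}+\Vert u\Vert_{H}\Vert u\Vert_{4})\Vert v\Vert_{4}$ followed by the Sobolev embedding. The only genuine difference is in the coercivity step: the paper expands $(\Uo-u)_\theta=(\Uo)_\theta-u_\theta$ and extracts the non-negative term $\cos\theta\int|\Uo-u|\,|u|^2$, arriving at $\langle G(t)u,u\rangle\geq -\tfrac{1}{4\cos\theta}\int|\Uo-u|\,|\Uo|^2$; this residual still contains $u$, so a further Young split producing the $\varepsilon\Vert u\Vert_V^2$ term is what finishes the estimate. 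Your substitution $u=\Uo-w$ instead extracts $\cos\theta\int|w|^3$ and absorbs $\int|w|^2|\Uo|$ directly into it, leaving only $C\Vert\Uo\Vert_3^3\in L^1(0,T)$. Both routes rest on the same identity $w_\theta\cdot w=\cos\theta|w|^2$ and the same interpolation $\Vert\Uo\Vert_3^3\leq\Vert\Uo\Vert_2\Vert\Uo\Vert_4^2$; yours is slightly sharper in that it yields $\langle G(t)u,u\rangle\geq -U_\bullet(t)$ with no $\varepsilon\Vert u\Vert_V^2$ correction at all, as you observe.
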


\begin{proof}
    The family $t\mapsto G(t)$ is clearly well-defined and (weakly) measurable. Hemicontinuity and 
   (\ref{eq:Gbdd}) follow from 
    the estimate
    \begin{multline*}
        \int_{\Omega}|\Uo(t)-u|(\Uo(t)-u)_{\theta}\cdot v\label{eq:taudefined-1}\\
         \leq2(\Vert\Uo(t)\Vert_{H}\Vert\Uo(t)\Vert_{L^{4}(\Omega)}+\Vert u\Vert_{H}\Vert u\Vert_{L^{4}(\Omega)})\Vert v\Vert_{L^{4}(\Omega)}.
    \end{multline*}
    Using that $\cos\theta>0$, the bound in (\ref{eq:Gcoerc}) follows from
    \begin{align*}
        \langle G(t)u,u\rangle & =-\int_{\Omega}|\Uo(t)-u|(\Uo)_{\theta}\cdot u+\int_{\Omega}|\Uo(t)-u|(\cos\theta)|u|^{2}\\
        & \geq-\frac{1}{4\cos\theta}\int_{\Omega}|\Uo(t)-u||\Uo(t)|^{2},
	  \end{align*}	
	  and the integrability assumptions on $\Uo$. 
\end{proof}

\begin{lem}
    \label{lem:Gmonotone}If $0\leq\theta\leq\frac{\pi}{4}$, then the operators
    $G(t)$ are monotone.
\end{lem}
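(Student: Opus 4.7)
The plan is to reduce monotonicity of $G(t)$ to a pointwise algebraic inequality. For fixed $t$ and $u,v\in V$, set $a:=\Uo(t)-u$ and $b:=\Uo(t)-v$, so that $u-v=b-a$ and
\begin{equation*}
\langle G(t)u-G(t)v,u-v\rangle=\cauo\int_{\Omega}\bigl(|a|a_{\theta}-|b|b_{\theta}\bigr)\cdot(a-b).
\end{equation*}
Since $\cauo\geq 0$, it suffices to show the pointwise inequality $(|a|a_{\theta}-|b|b_{\theta})\cdot(a-b)\geq 0$ for all $a,b\in\R^{2}$, i.e., monotonicity of the vector field $F\colon\R^{2}\to\R^{2}$, $F(x):=|x|x_{\theta}$.

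For the pointwise inequality, use $x_{\theta}=\cos\theta\,x+\sin\theta\,x^{\perp}$ together with $x\cdot x^{\perp}=0$ and $b^{\perp}\cdot a=-a^{\perp}\cdot b$ to expand
\begin{equation*}
(F(a)-F(b))\cdot(a-b)=\cos\theta\bigl(|a|^{3}+|b|^{3}-(|a|+|b|)(a\cdot b)\bigr)+\sin\theta(|b|-|a|)(a^{\perp}\cdot b).
\end{equation*}
Writing $r:=|a|$, $s:=|b|$ and $\alpha$ for the signed angle between $a$ and $b$, so that $a\cdot b=rs\cos\alpha$ and $a^{\perp}\cdot b=rs\sin\alpha$, the two-dimensional Cauchy--Schwarz estimate $A\cos\alpha+B\sin\alpha\leq\sqrt{A^{2}+B^{2}}$ applied with $A=\cos\theta(r+s)$, $B=\sin\theta(r-s)$ reduces the required inequality to
\begin{equation*}
\cos^{2}\theta\,(r^{3}+s^{3})^{2}\geq r^{2}s^{2}\bigl[\cos^{2}\theta(r+s)^{2}+\sin^{2}\theta(r-s)^{2}\bigr].
\end{equation*}
Using the factorization $(r^{3}+s^{3})^{2}-r^{2}s^{2}(r+s)^{2}=(r+s)^{2}(r-s)^{2}(r^{2}+s^{2})$ and the AM--GM bounds $(r+s)^{2}\geq 4rs$, $r^{2}+s^{2}\geq 2rs$, this is implied by $\tan^{2}\theta\leq 8$, which holds comfortably for $\theta\in[0,\pi/4]$.

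The main obstacle is the antisymmetric contribution $\sin\theta(|b|-|a|)(a^{\perp}\cdot b)$, which can carry either sign and does not vanish for $\theta>0$, so it cannot be handled by a direct reduction to the classical monotonicity of $x\mapsto|x|x$ (which corresponds to $\theta=0$ and is encoded in the nonnegative term $(|a|+|b|)(|a|-|b|)^{2}\cos\theta$). The hypothesis $\theta\leq\pi/4$ keeps the rotational cross-term dominated by this convex contribution; the gap $\sqrt{8}\gg\tan(\pi/4)=1$ in the threshold indicates that the assumption $\theta\leq\pi/4$ is in fact comfortably stronger than what monotonicity alone would require.
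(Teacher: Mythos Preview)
Your proof is correct and proceeds by a genuinely different route from the paper's. Both arguments first reduce to the pointwise inequality $(|a|a_{\theta}-|b|b_{\theta})\cdot(a-b)\geq 0$ and write it as a homogeneous expression in $r=|a|$, $s=|b|$ and the angle between $a$ and $b$. From there the paper divides through by $\cos\theta\,s^{3}$ to obtain a cubic $p(\gamma)$ in $\gamma=r/s$, computes its discriminant as a function of $(\cos\varphi,\tan^{2}\theta)$, and checks on the boundary $\tan^{2}\theta\in\{0,1\}$ that the discriminant stays negative, so that the single real root of $p$ is negative. You instead absorb the angle dependence in one stroke via $A\cos\alpha+B\sin\alpha\leq\sqrt{A^{2}+B^{2}}$, then use the exact factorization $(r^{3}+s^{3})^{2}-r^{2}s^{2}(r+s)^{2}=(r+s)^{2}(r-s)^{2}(r^{2}+s^{2})$ together with AM--GM. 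Your argument is more elementary (no discriminant computation), and it makes the quantitative margin explicit: monotonicity in fact holds for all $\theta$ with $\tan^{2}\theta\leq 8$, i.e.\ up to $\theta=\arctan(2\sqrt{2})\approx 70.5^{\circ}$, well beyond the assumed $\pi/4$. The paper's discriminant approach could in principle be pushed to locate the sharp threshold, but as written it also only verifies the range $[0,\pi/4]$.
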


\begin{proof}
   Pointwise, we prove the non-negativity of the integrand in 
    \begin{equation*}
        \langle G(t)u-G(t)v,u-v\rangle = \int_{\Omega}\begin{aligned}[t]&\cos\theta\left(|\Uo(t)-u|^{3}+|\Uo(t)-u|^{3}\right)\\
         &-|\Uo(t)-u|\left(\Uo(t)-u\right)_{\theta}\left(\Uo(t)-v\right)\\
         &-|\Uo(t)-v|\left(\Uo(t)-v\right)_{\theta}\left(\Uo(t)-u\right).\end{aligned}
    \end{equation*}
    To simplify notation, for every $x\in\Omega$, set $a:=\Uo(t,x)-u(x)$,
    $b:=\Uo(t,x)-v(x)$ and $\varphi\in[0,\pi]$ as the angle between
    $a$ and $b$. Furthermore, define $\alpha:=|a| = |a_\theta|$ and $\beta:=|b| = |b_\theta|$.
    Then
    \begin{align*}
        a_{\theta}\cdot b & =\cos\theta\cos\varphi\,\alpha\beta+\sin\theta\sin\varphi\,\alpha\beta,\\
        b_{\theta}\cdot a & =\cos\theta\cos\varphi\,\alpha\beta-\sin\theta\sin\varphi\,\alpha\beta,
    \end{align*}
    and thus, the integrand can be expressed as the polynomial
    \[
    P(\alpha,\beta):=\cos\theta\left(\alpha^{3}+\beta^{3}-\cos\varphi\alpha\beta(\alpha+\beta)-\tan\theta\sin\varphi\alpha\beta(\alpha-\beta)\right).
    \]
    We show that $P(\alpha,\beta)\geq0$ for $\alpha,\beta \geq 0$. This is clearly true if $\beta=0$.
    Otherwise, we can divide by $\frac{1}{\cos\theta\beta^{3}}$ and consider the
    rescaled polynomial
    \[
    p(\gamma):=\gamma^{3}-(\cos\varphi+\tan\theta\sin\varphi)\gamma^{2}+(-\cos\varphi+\tan\theta\sin\varphi)\gamma+1.
    \]
    The discriminant of $p$, written in terms of $T:=\tan^{2}\theta$
    and $S:=\cos\varphi$ with $\sin\varphi=\sqrt{1-S^{2}}$ is
    \begin{equation*}
        d(S,T):=(1-S^{2})^{2}T^{2}+(-2S^{2}+24S-18)(1-S^{2})T+(S^{4}+8S^{3}+18S^{2}-27).
    \end{equation*}
	Due to the condition $0\leq\theta\leq\frac{\pi}{4}$, we only need to
	    check $0\leq T\leq 1$ and $-1\leq S\leq 1$. If $S=1$, then $P(\alpha,\beta) = \cos \theta (\alpha-\beta)^2(\alpha + \beta) \geq 0$.
    At the $T$-boundaries of the relevant range, $d(S,0) < 0$, and
    \[
    d(S,1)=4S^{4}-16S^{3}+32S^{2}+24S-44< 0,
    \]
    so $d(S,T)<0$ for all $0< T <1$, because $d$ is quadratic in $T$
    and the leading coefficient is positive. This means that $p$ has
    only one real root. Due to $p(0)=1>0$ and $p(\lambda)\to\infty$
    as $\lambda\to\infty$, the real root must be negative. We have shown
    that $p>0$, which implies $P>0$ and thus, the monotonicity of $G(t)$.
\end{proof}

\begin{rem}
    Lemmas \ref{lem:Gbounds} and \ref{lem:Gmonotone} remain true for non-constant $$\theta \in L^\infty(0,T; L^\infty(\Omega; [0,\frac\pi4])).$$
\end{rem}

\subsection{A-priori estimates and stability}

Theorem \ref{thm:mr} is obtained by Galerkin approximation and using
monotonicity through Minty's trick. We refer e.g.\ to the monograph
of Zeidler \cite{Zei90B} 
and references therein. Here, we also need the result of Liu \cite{Liu11} on pseudomonotonicity for specific semilinear terms, in order to deal with the oceanic forcing. If $\tauo=0$, then by Lemmas
\ref{lem:monotone} and \ref{lem:coercive}, the hypothesis of Theorem
30.A in \cite{Zei90B} 
are verified and the result follows directly.
More precisely, it is shown that there
exists a unique solution
\[
u\in  C([0,T];H)\cap L^{2}(0,T;V), \dot{u} \in L^{2}(0,T;V^{*}),
\]
with initial value $u(0)=u_{0}$ of the equation
\begin{equation*}
    \dot{u}(t)+(A(t)+C)u(t)=h(t)\in V^{*},\label{eq:Atilde}
\end{equation*}
for a.a. $t\in(0,T)$, and $u$ satisfies
the corresponding a-priori estimate
\begin{equation*}
    \Vert u\Vert_{C([0,T];H)}+\frac{\cb}{2}\Vert u\Vert_{L^{2}(0,T;V)}^{2}\leq\Cb+\Vert u_{0}\Vert_{H}+c(\cb)\Vert h\Vert_{L^{2}(0,T;V^{*})}^{2}.\label{eq:apriori}
\end{equation*}
If $\tauo\neq0$, then by Lemmas \ref{lem:Gbounds} and \ref{lem:Gmonotone}, the corresponding family of operators
\[
t\mapsto A(t)+C+G(t):V\to V^*
\]
satisfies the hypotheses (H1)-(H4) in the paper by Liu, \cite{Liu11}, thus also proving Theorem
\ref{thm:mr}. In addition to uniqueness, by Theorem 1.2 in \cite{Liu11}, well-posedness holds in the sense that
\begin{equation}\label{wellposed}
\Vert u_1(t) - u_2(t) \vert_H^2 \leq \Vert u_{1,0} - u_{2,0} \Vert^2_H + \int_0^t \Vert h_1(s) - h_2(s) \Vert^2_H, 
\end{equation}
for all solutions $u_1,u_2$ corresponding to initial data $u_{1,0},u_{2,0} \in H$ and right-hand-sides 
\begin{equation}\label{hinH}
h_1, h_2 \in L^2(0,T;H),
\end{equation}
respectively. Note that the requirement (\ref{hinH}) is slightly stronger than the assumption $h \in L^2(0,T;V^*)$ in Theorem \ref{thm:mr}. 

\subsection{Hibler's operator with $\varepsilon$-regularization}\label{ssec:eps} 
A typical global regularization of Hibler's operator is given by replacing $\delta_{p}$ with $\delta^\varepsilon_p$ defined in (\ref{deltaeps}).  The corresponding operators 
$$ A_p^\varepsilon(t) \colon V\to V^{*},
\; \langle A^\varepsilon_{p}(t)u,v\rangle := \int_\Omega \frac{P(t)}2 \left( \frac{D^\lambda u : Dv}{\delta_{p}^\varepsilon(Du)} - \div v \right),
$$
are monotone, hemicontinuous and clearly also satisfy a growth bound of type (\ref{eq:Abdd-1}). The monotonicity follows as for $A(t)$, where in the proof, $f$ would be replaced by the monotone function $f_p^\varepsilon(x) := x/\sqrt{\varepsilon + x^2}$. The hemicontinuity also follows analogously  due to the strict positivity of $\delta_{p}^\varepsilon \geq \sqrt{\varepsilon} > 0$.
On the other hand, $A_p^\varepsilon(t)$ is not coercive. This can be changed by introducing a cut-off at large values, 
	$$  \delta_{p}^{\varepsilon,\bullet} (z) := \max\lbrace \delta_{p}^\varepsilon(z), \delu \rbrace, $$
and is also true in case of a cut-off on both sides, with $\delta_p$ in (\ref{eq:defdel}) replaced by $\delta^\varepsilon_p$, 
\begin{equation*}
\delta^\varepsilon \colon\R^{2\times2}\ni z\mapsto\begin{cases}
\del^{\bullet}, & \delta^\varepsilon_{p}(z)\geq\del^{\bullet},\\
\del^\varepsilon_{p}(z), & \del_{\bullet}\leq\del^\varepsilon_{p}(z)\leq\del^{\bullet},\\
\del_{\bullet}, & \del_{\bullet}\geq\delta^\varepsilon_{p}(z),
\end{cases}
\end{equation*}
which is standard in numerical simulations \cite{MK2021}. So in both cases, using either $\delta_{p}^{\varepsilon,\bullet}$ or $\delta^\varepsilon$ instead of $\delta$, Theorem \ref{thm:mr} also holds.

\bibliographystyle{alpha}
\bibliography{literatur}

\newcommand{\etalchar}[1]{$^{#1}$}
\begin{thebibliography}{BDHDH22}

\bibitem[ABCM01]{ABCM2001}
F.~Andreu, C.~Ballester, V.~Caselles, and J.~M. Maz{\'o}n.
\newblock Minimizing total variation flow.
\newblock {\em Differ. Integral Equ.}, 14(3):321--360, 2001.

\bibitem[BBH24a]{BBH24Arxiv}
Tim Binz, Felix Brandt, and Matthias Hieber.
\newblock Interaction of the primitive equations with sea ice dynamics, 2024.

\bibitem[BBH24b]{BBH24}
Tim Binz, Felix Brandt, and Matthias Hieber.
\newblock Rigorous analysis of the interaction problem of sea ice with a rigid
  body.
\newblock {\em Math. Ann.}, 389(1):591--625, 2024.

\bibitem[BCE25]{BCE2025}
Fran{\c{c}}ois Bouchut, Carsten Carstensen, and Alexandre Ern.
\newblock {{\(H^1\)}} regularity of the minimizers for the inviscid total
  variation and {Bingham} fluid problems for {{\(H^1\)}} data.
\newblock {\em Nonlinear Anal., Theory Methods Appl., Ser. A, Theory Methods},
  258:21, 2025.
\newblock Id/No 113809.

\bibitem[BDHDH22]{BDDH22}
Felix Brandt, Karoline Disser, Robert Haller-Dintelmann, and Matthias Hieber.
\newblock Rigorous analysis and dynamics of {Hibler}'s sea ice model.
\newblock {\em J. Nonlinear Sci.}, 32(4):26, 2022.
\newblock Id/No 50.

\bibitem[BH23]{BH23}
Felix Brandt and Matthias Hieber.
\newblock Time periodic solutions to {Hibler}'s sea ice model.
\newblock {\em Nonlinearity}, 36(6):3109--3124, 2023.

\bibitem[BLTT25]{BLTT25}
Daniel~W. Boutros, Xin Liu, Marita Thomas, and Edriss~S. Titi.
\newblock Global well-posedness of the elastic-viscous-plastic sea-ice model
  with the inviscid {Voigt}-regularisation.
\newblock Preprint, {arXiv}:2505.03080 [math.{AP}] (2025), 2025.

\bibitem[Bra25]{Brandt24}
Felix Christopher Helmut~Ludwig Brandt.
\newblock Well-posedness of {Hibler}'s parabolic-hyperbolic sea ice model.
\newblock Preprint, {arXiv}:2405.20198 [math.{AP}] (2025), 2025.

\bibitem[CKK23]{CK23}
Soufiane Chatta, Boualem Khouider, and M’hamed Kesri.
\newblock Linear well posedness of regularized equations of sea-ice dynamics.
\newblock {\em Journal of Mathematical Physics}, 64, 05 2023.

\bibitem[CMP{\etalchar{+}}74]{CMPRT1974}
M.~Coon, G.~Maykut, R.~Pritchard, D.~A. Rothrock, and A.~S. Thorndike.
\newblock Modeling the pack ice as an elastic-plastic material.
\newblock {\em AIDJEX Bulletin}, 24:1--105, 1974.

\bibitem[DL76]{DL76}
Georges Duvaut and Jacques-Louis Lions.
\newblock {\em Inequalities in Mechanics and Physics}, volume 219 of {\em
  Grundlehren der mathematischen Wissenschaften}.
\newblock Springer Berlin Heidelberg, 1976.

\bibitem[FP03]{FP2003}
Xiaobing Feng and Andreas Prohl.
\newblock Analysis of total variation flow and its finite element
  approximations.
\newblock {\em M2AN, Math. Model. Numer. Anal.}, 37(3):533--556, 2003.

\bibitem[GBC{\etalchar{+}}20]{BCEFGHHJPPSSW20}
Kenneth~M. Golden, Luke~G. Bennetts, Elena Cherkaev, Ian Eisenman, Daniel
  Feltham, Christopher Horvat, Elizabeth Hunke, Christopher Jones, Donald~K.
  Perovich, Pedro Ponte-Casta{\~n}eda, Courtenay Strong, Deborah Sulsky, and
  Andrew~J. Wells.
\newblock Modeling sea ice.
\newblock {\em Notices Am. Math. Soc.}, 67(10):1535--1555, 2020.

\bibitem[GK19]{GK2019}
Ugo Gianazza and Colin Klaus.
\newblock {{\(p\)}}-parabolic approximation of total variation flow solutions.
\newblock {\em Indiana Univ. Math. J.}, 68(5):1519--1550, 2019.

\bibitem[GLS13]{GLS13}
Oksana Guba, Jens Lorenz, and Deborah Sulsky.
\newblock On well-posedness of the viscous–plastic sea ice model.
\newblock {\em Journal of Physical Oceanography}, 43(10):2185 -- 2199, 2013.

\bibitem[Gra99]{Gray99}
J.~M. N.~T. Gray.
\newblock Loss of hyperbolicity and ill-posedness of the viscous–plastic sea
  ice rheology in uniaxial divergent flow.
\newblock {\em Journal of Physical Oceanography}, 29(11):2920 -- 2929, 1999.

\bibitem[HABea20]{HABB2020}
Elizabeth Hunke, Richard Allard, Philippe Blain, and et~al.
\newblock Should sea-ice modeling tools designed for climate research be used
  for short-term forecasting?
\newblock {\em Current Climate Change Reports}, 6:121--136, 2020.

\bibitem[HD97]{Hunke1997}
Eric~C. Hunke and John~K. Dukowicz.
\newblock An elastic–viscous–plastic model for sea ice dynamics.
\newblock {\em Journal of Physical Oceanography}, 27(9):1849--1867, 1997.

\bibitem[HD02]{DH02}
Elizabeth~C. Hunke and John~K. Dukowicz.
\newblock The elastic–viscous–plastic sea ice dynamics model in general
  orthogonal curvilinear coordinates on a sphere—incorporation of metric
  terms.
\newblock {\em Monthly Weather Review}, 130(7):1848 -- 1865, 2002.

\bibitem[Hib79]{Hib79}
W.D. Hibler.
\newblock A dynamic thermodynamic sea ice model.
\newblock {\em Journal of physical oceanography}, 9(4):815--846, 1979.

\bibitem[HTC{\etalchar{+}}19]{HTCF2019}
H.~D. B.~S. Heorton, M.~Tsamados, S.~T. Cole, A.~M.~G. Ferreira, A.~Berbellini,
  M.~Fox, and T.~W.~K. Armitage.
\newblock Retrieving sea ice drag coefficients and turning angles from in situ
  and satellite observations using an inverse modeling framework.
\newblock {\em Journal of Geophysical Research: Oceans}, 124(8):6388--6413,
  2019.

\bibitem[KDL15]{KDL2015}
Madlen Kimmritz, Sergey Danilov, and Martin Losch.
\newblock On the convergence of the modified elastic-viscous-plastic method for
  solving the sea ice momentum equation.
\newblock {\em J. Comput. Phys.}, 296:90--100, 2015.

\bibitem[KHLF00]{KHLF00}
Martin Kreyscher, Markus Harder, Peter Lemke, and Gregory~M. Flato.
\newblock Results of the sea ice model intercomparison project: Evaluation of
  sea ice rheology schemes for use in climate simulations.
\newblock {\em Journal of Geophysical Research: Oceans}, 105(C5):11299--11320,
  2000.

\bibitem[Liu11]{Liu11}
Wei Liu.
\newblock Existence and uniqueness of solutions to nonlinear evolution
  equations with locally monotone operators.
\newblock {\em Nonlinear Anal., Theory Methods Appl., Ser. A, Theory Methods},
  74(18):7543--7561, 2011.

\bibitem[LKT{\etalchar{+}}12]{LKTHL12}
J.-F. Lemieux, D.~A. Knoll, B.~Tremblay, D.~M. Holland, and M.~Losch.
\newblock A comparison of the jacobian-free newton-krylov method and the evp
  model for solving the sea ice momentum equation with a viscous-plastic
  formulation: A serial algorithm study.
\newblock {\em Journal of Computational Physics}, 231(17):5926--5944, 2012.

\bibitem[LMC{\etalchar{+}}10]{LMCHH10}
M.~Losch, D.~Menemenlis, J.-M. Campin, P.~Heimbach, and C.~Hill.
\newblock On the formulation of sea-ice models. part 1: Effects of different
  solver implementations and parameterizations.
\newblock {\em Ocean Modelling}, 33(1-2):129--144, 2010.

\bibitem[LTT22]{LTT22}
Xin Liu, Marita Thomas, and Edriss~S. Titi.
\newblock Well-posedness of {Hibler}'s dynamical sea-ice model.
\newblock {\em J. Nonlinear Sci.}, 32(4):31, 2022.
\newblock Id/No 49.

\bibitem[MDL{\etalchar{+}}21]{MDLLHRBHK2021}
C.~Mehlmann, S.~Danilov, M.~Losch, J.~F. Lemieux, N.~Hutter, T.~Richter,
  P.~Blain, E.~C. Hunke, and P.~Korn.
\newblock Simulating linear kinematic features in viscous-plastic sea ice
  models on quadrilateral and triangular grids with different variable
  staggering.
\newblock {\em Journal of Advances in Modeling Earth Systems},
  13(11):e2021MS002523, 2021.
\newblock e2021MS002523 2021MS002523.

\bibitem[MK21]{MK2021}
Carolin Mehlmann and Peter Korn.
\newblock Sea-ice dynamics on triangular grids.
\newblock {\em J. Comput. Phys.}, 428:18, 2021.
\newblock Id/No 110086.

\bibitem[SK18]{SK18}
C.~Seinen and B.~Khouider.
\newblock Improving the jacobian free newton-krylov method for the
  viscous-plastic sea ice momentum equation.
\newblock {\em Physica D: Nonlinear Phenomena}, 376:78--93, 2018.

\bibitem[Zei90]{Zei90B}
Eberhard Zeidler.
\newblock {\em Nonlinear functional analysis and its applications. {II}/{B}:
  {Nonlinear} monotone operators. {Transl}. from the {German} by the author and
  by {Leo} {F}. {Boron}}.
\newblock New York etc.: Springer-Verlag, 1990.

\bibitem[ZH97]{ZH97}
J.~Zhang and W.~Hibler.
\newblock On an efficient numerical method for modeling sea ice dynamics.
\newblock {\em Journal of Geophysical Research: Oceans}, 102(C4):8691--8702,
  1997.

\end{thebibliography}

\end{document}